\documentclass[11pt]{article}
\usepackage{latexsym}
\usepackage{epsfig,enumerate}
\usepackage{amsmath,amsthm,amssymb}
\usepackage{algorithm2e}
\usepackage{xcolor}
\usepackage{hyperref}


\setlength{\textwidth}{6.5in} \setlength{\evensidemargin}{0.0in}
\setlength{\oddsidemargin}{0.0in} \setlength{\textheight}{9.0in}
\setlength{\topmargin}{-0.5in} \setlength{\parskip}{1.3mm}
\setlength{\baselineskip}{1.7\baselineskip}

\usepackage{graphicx}
\numberwithin{equation}{section}
\definecolor{brown}{cmyk}{0, 0.72, 1, 0.45}
\definecolor{grey}{gray}{0.5}


\renewcommand{\epsilon}{\varepsilon}

\def\bh{\vec{b}}
\everymath{\displaystyle}

\newcounter{rot}


     \def\l{\ell}

\newtheorem{maintheorem}{Theorem}

\newtheorem*{conjecture*}{Conjecture}
\newtheorem{theorem}{Theorem}[section]
\newtheorem*{theorem*}{Theorem}

\newtheorem{fact}[theorem]{Fact}


\newcommand{\rbrac}[1]{\left(#1\right)}
\newcommand{\cbrac}[1]{\left\{#1\right\}}
\newcommand{\sbrac}[1]{\left[{ #1}\right]}

\def\E{\mathbb{E}}


\newcommand{\ignore}[1]{}

\newcommand{\beq}[1]{\begin{equation}\label{#1}}
\newcommand{\eeq}{\end{equation}}


\newcommand{\mc}[1]{\mathcal{#1}}

\title{The set of ratios of derangements to permutations in digraphs is dense in $[0, 1/2]$.}

\author{Bethany Austhof \thanks{Department of Mathematics, Statistics, and Computer Science, University of Illinois Chicago, Chicago, IL, USA \texttt{bausth2@uic.edu}}\and  Patrick Bennett\thanks{Department of Mathematics, Western Michigan University, Kalamazoo, MI, USA
\texttt{patrick.bennett@wmich.edu}. Research supported in part by Simons Foundation Grant \#426894.}  \and Nick Christo \thanks{ Department of Mathematics, Statistics, and Computer Science, University of Illinois Chicago, Chicago, IL, USA \texttt{nchrist5@uic.edu}}}
\date{}
\begin{document}
\maketitle

\begin{abstract}
A {\em permutation} in a digraph $G=(V, E)$ is a bijection $f:V \rightarrow V$ such that for all $v \in V$ we either have that $f$ fixes $v$ or $(v, f(v)) \in E$. A {\em derangement} in $G$ is a permutation that does not fix any vertex. In \cite{BDHHL} it is proved that in any digraph, the ratio of derangements to permutations is at most $1/2$. Answering a question posed in \cite{BDHHL}, we show that the set of possible ratios of derangements to permutations in digraphs is dense in the interval $[0, 1/2]$.
\end{abstract}

\section{Introduction}

A {\em permutation} in a digraph $G=(V, E)$ is a bijection $f:V \rightarrow V$ such that for all $v \in V$ we either have that $f$ fixes $v$ or $(v, f(v)) \in E$. A {\em derangement} in $G$ is a permutation that does not fix any vertex. We define the parameter $(d/p)_G$ to be the ratio of derangements to permutations in $G$. 

Bucic, Devlin, Hendon, Horne and Lund \cite{BDHHL} showed that $(d/p)_G \le 1/2$ for all digraphs $G$, with equality if and only if $G$ is a directed cycle. They also gave a construction (the blow-up of a directed cycle) that can achieve a ratio arbitrarily close to but not equal to $1/2$. Let $S=\{(d/p)_G\; :\; G \;\;\text{is a digraph}\}$ be a set of values arising as a ratio $(d/p).$ In \cite{BDHHL} they analyzed the ratio $(d/p)_G$ for the random graph $G=G(n, m)$, and as a corollary of this analysis they showed that $S$ is dense in $[0, 1/e]$. This corollary follows from two facts:  $(d/p)_G$ is concentrated around its mean, and by choosing a suitable value of $m$ one can make the expected ratio $(d/p)_G$ close to any given value in $[0, 1/e]$. At the end of the paper \cite{BDHHL} they ask whether $S$ is dense in $[0, 1/2]$. Our main theorem, below, answers this question in the positive. 
 
\begin{maintheorem}\label{thm:main}
The set of possible ratios of derangements to permutations in digraphs is dense in $[0, 1/2]$.
\end{maintheorem}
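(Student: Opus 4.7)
Since \cite{BDHHL} already establishes density of $S$ in $[0,1/e]$, and since any directed cycle witnesses $\tfrac12\in S$, it suffices to prove density of $S$ in the open interval $(1/e,\tfrac12)$. My plan is a two-parameter random construction built atop a blow-up of a directed cycle, continuously interpolating between the blow-up's ratio (which can be tuned close to $\tfrac12$ by choice of the cycle length) and the ratio of the complete digraph (which tends to $1/e$).

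For integers $k\ge 2$, $n\ge 1$, and $q\in[0,1]$, let $\Gamma=\Gamma(k,n,q)$ be the random digraph with parts $V_0,\ldots,V_{k-1}$, each of size $n$, where all \emph{cross} edges $V_i\to V_{i+1}$ (indices mod $k$) are present deterministically and each other non-loop directed edge is included independently with probability $q$. At $q=0$, $\Gamma$ is the cycle blow-up with $(d/p)_\Gamma=R_k(n):=1/\sum_{m=0}^n 1/(m!)^k$; at $q=1$, $\Gamma$ is the complete digraph on $kn$ vertices with $(d/p)_\Gamma=D_{kn}/(kn)!\to 1/e$. Moreover $r_k:=\lim_{n\to\infty}R_k(n)$ tends to $\tfrac12$ as $k\to\infty$. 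Writing $j(\sigma)$ for the number of non-cross edges used by a permutation $\sigma$ of the complete digraph, linearity of expectation gives $\E[D(\Gamma)]=\sum_{\sigma\ \text{derangement}} q^{j(\sigma)}$ and $\E[P(\Gamma)]=\sum_{\sigma\ \text{permutation}} q^{j(\sigma)}$, polynomials in $q$, so $\E[D(\Gamma)]/\E[P(\Gamma)]$ is a continuous (rational) function of $q$ on $[0,1]$.

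Fix a target $t\in(1/e,\tfrac12)$. Pick $k$ large enough that $r_k>t$, then let $n$ be large. Then $q\mapsto \E[D]/\E[P]$ is continuous on $[0,1]$, taking value $R_k(n)>t$ at $q=0$ and $D_{kn}/(kn)!<t$ at $q=1$, so by the intermediate value theorem there exists $q=q(k,n,t)\in(0,1)$ with $\E[D(\Gamma)]/\E[P(\Gamma)]=t$. A second-moment argument (adapting the machinery of \cite{BDHHL}) shows that $D(\Gamma)$ and $P(\Gamma)$ each concentrate around their expectations as $n\to\infty$, whence $D(\Gamma)/P(\Gamma)\to t$ in probability. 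Some realization $G_0$ of $\Gamma(k,n,q(k,n,t))$ then has $(d/p)_{G_0}$ within $\varepsilon$ of $t$, so $t\in\overline S$. Letting $t$ range over $(1/e,\tfrac12)$ and $\varepsilon\to 0$ gives density of $S$ in $(1/e,\tfrac12)$, and combined with \cite{BDHHL} and $\tfrac12\in S$, density in $[0,\tfrac12]$.

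The main technical obstacle is the concentration step, especially when $q$ is small, which is the regime needed for $t$ close to $\tfrac12$. The two-parameter structure $(k,n)$ is crucial here: for $t$ very close to $\tfrac12$ one chooses $k$ large so that $r_k-t$ remains positive and meaningful, and then $n$ large so that the vertex set of size $kn$ and the number of random edges (of order $(kn)^2 q$) are large enough to enforce tight concentration of $D$ and $P$ around their means. I plan to adapt the second-moment calculations of \cite{BDHHL}, using independence of the non-cross edges to bound the covariances $\mathrm{Cov}(\bfo_{\sigma_1\in\Gamma},\bfo_{\sigma_2\in\Gamma})$ between indicators of pairs of permutations sharing non-cross edges, and then apply Chebyshev's inequality to deliver $\Var(D)/\E[D]^2=o(1)$ and likewise for $P$.
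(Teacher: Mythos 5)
Your reduction to targets $t\in(1/e,\tfrac12)$, the two-parameter construction, and the intermediate value argument on $q\mapsto \E[D]/\E[P]$ are all fine, and your model (keep the blow-up's cross edges intact and sprinkle in independent non-cross edges) is genuinely different from the paper's (a uniformly random $m$-edge subgraph of $D_{k,\ell}$). The gap is that the concentration step --- which is the entire technical content of the theorem --- is only asserted, and your stated plan for it (``independence of the non-cross edges'' plus covariance bounds and Chebyshev) runs straight into the obstruction the paper flags in its closing remarks: by Janson \cite{J94}, permanent-type counts (perfect matchings, derangements) over a binomial random graph at constant edge density are \emph{not} concentrated around their means; the ratio $\E[Z^2]/\E[Z]^2$ tends to a constant strictly greater than $1$ (a lognormal-type limit), so a bound of the form $\Var(D)/\E[D]^2=o(1)$ is simply false whenever the permutations carrying the weight of $\E[D]$, $\E[P]$ use linearly many independent edges at density bounded away from $0$ and $1$. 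This is precisely why the paper abandons the binomial model and conditions on the number of edges, so that the hypergeometric probabilities of Fact~\ref{fact:edgeprob} make the second moments come out right. Your argument can only survive if the IVT crossing point $q(k,n,t)$ is guaranteed to land in a regime where the dominant permutations use very few random edges, and you have not established (or even discussed) where that crossing lies.

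That missing analysis is not a formality. One can check that for fixed $k$ the expected ratio stays at $r_k+o(1)$ until $q$ reaches order $\log n/n$, then collapses inside a window of width $O(1/n)$ to a value bounded strictly below $1/e$ (roughly $e^{-1/((1-1/k)q+1/k)}$), and climbs back to $1/e$ only as $q\to 1$; consequently \emph{every} crossing with a target $t>1/e$ lies inside that narrow sparse window, where the contributing permutations use $\Theta(k\log n)$ random edges and a second-moment argument is plausible --- but this requires locating the window, ruling out the constant-$q$ branch (where your variance claim is false), and then doing overlap bookkeeping of the same kind and difficulty as the paper's sums over $S_b$ together with the $h(a,b)$ estimates of Fact~\ref{fact:hest}, adapted to permutations that are cross matchings with $\Theta(\log n)$ rerouted excursions. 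None of this appears in your proposal; indeed your remark that small $q$ is only needed ``for $t$ close to $\tfrac12$'' suggests you expect to work at larger $q$ for other targets, exactly where the Chebyshev plan breaks down. As written, the claim that $D(\Gamma)$ and $P(\Gamma)$ concentrate is unsupported, so the proof is incomplete; either supply the window analysis and the full second-moment computation in your model, or switch (as the paper does) to a model with a fixed number of random edges, at which point the calculation essentially becomes the one in the paper.
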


The construction we use, described in more detail later, is a random subgraph of the blow-up of a directed cycle. The main part of the proof is an application of the second moment method (see \cite{FK}, for example, for an introduction to the method) to show that the number of derangements and permutations are concentrated around their expectations. 

\section{Proof of Theorem \ref{thm:main}}

\subsection{Outline}
First we outline the proof. Suppose we are given a fixed real number  $r \in [0, 1/2]$. We will show that there exists a sequence of digraphs $G_k$ such that the ratio of derangements to permutations in $G_k$ is $r+o(1)$ as $k \rightarrow \infty$ (which proves Theorem \ref{thm:main}). If $r=0$ or $1/2$ this is trivial. Indeed, for $r=0$ observe that a digraph with one vertex and no edges has no derangements and one permutation, and for $r=1/2$ observe that any directed cycle has one derangement and two permutations. So we assume $0 < r < 1/2$.

Our construction is as follows. As defined in \cite{BDHHL}, let the digraph $D_{k,\l}$ where $k\geq 1$ and $\l\geq 2$ have vertices $v_{ij}$ for $i\in [k]$ and $j\in[\l]$ such that $(v_{ij}, v_{lm})\in E(D_{k,\l})$ if and only if $m=j+1\, \text{mod}\, l$. In other words, $D_{k, \l}$ is the blow-up of a directed $\l$-cycle where each vertex is expanded to a set of $k$ vertices. We let $V_i = \{v_{ij}: j \in [\l]\}$. 
 As was shown in \cite{BDHHL}, the number of derangements on $D_{k,\l}$ is $(k!)^\l$ and the number of permutations on $D_{k,\l}$ is $\sum_{i=0}^k\left(\binom{k}{i} (k-i)!\right)^\l$. Hence, $(d/p)_{k,\l}=\left(\sum_{i=0}^k \left(\frac{1}{i!} \right)^\l\right)^{-1}$  can be made arbitrarily close to 1/2 by choosing $\ell$ large enough (even for large $k$). This construction yields a graph for which the ratio of derangements to permutations is arbitrarily close to 1/2 but not exactly 1/2. We will also use this construction, but we will randomly remove some edges. By taking a random subraph  we can ``interpolate" between $D_{k, \l}$ (a dense digraph whose ratio of derangements to permutations is close to $1/2$) and a sparse random digraph (whose ratio is $0$). 
 
 In this paper all asymptotics are as $k \rightarrow \infty$. $\l$ is treated as fixed. We use standard big-O, little-o and $\Omega$ notation. We write $x \sim y$ if $x=(1+o(1))y$. All logarithms are base $e$. 
 
 \subsection{Proof details}
 
 Let the random graph $G_{k,\l}(m)$  be chosen uniformly from among all subgraphs of $D_{k,\l}$ with $m$ edges. We will fix some $p, \l$ and let $m = pk^2 \l$ (so $p$ is the probability that any particular edge of $D_{k,\l}$ becomes an edge of $G_{k,\l}$). Let the random variables $X, Y$ be the number of derangements and permutations in $G_{k,\l}(m)$ respectively. Let $\mc{D}, \mc{P}$ be the collection of all possible derangements and permutations on $D_{k,\l}(m)$. 
 
 \subsubsection{First moments of $X, Y$}
 We have
 \begin{align}
     \mathbb{E}[X] &= \sum_{D \in \mc{D}}\mathbb{P}[D\subseteq G_{k, \l}]=(k!)^\l\frac{\binom{k^2\l-k \l}{m-k \l} }{\binom{k^2\l}{m}}\nonumber\\
     &=(k!)^\l \left(\frac{m}{k^2\ell} \right)^{k\ell}\exp\left\{\frac{k^2 \ell^2}{2}\left(\frac{1}{k^2\ell}-\frac{1}{m}\right)+O\left(\frac{k^3}{m^2} + \frac{k}{m} \right) \right\}\nonumber\\
     &\sim (k!)^\l p^{k\ell}\exp\left\{\frac{ \ell}{2}\left(1-\frac{1}{p}\right) \right\}\label{eqn:EX}
 \end{align} 
where on the second line we have used the following fact:
\begin{fact}\label{fact:edgeprob}
\[
\frac{\binom{a-x}{b-x} }{\binom{a}{b}} = \frac{(b)_x}{(a)_x} = \rbrac{\frac ba}^x \exp\cbrac{\frac{x^2}{2}\rbrac{\frac1a - \frac1b} + O\rbrac{\frac{x^3}{b^2} + \frac xb}}.
\]
\end{fact}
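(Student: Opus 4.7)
The fact consists of two equalities: the identity $\binom{a-x}{b-x}/\binom{a}{b} = (b)_x/(a)_x$ (where $(n)_x := n(n-1)\cdots(n-x+1)$ is the falling factorial), followed by a second-order expansion of that ratio. The first is purely algebraic: I would write $\binom{a-x}{b-x}=(a-x)!/[(b-x)!(a-b)!]$ and $\binom{a}{b}=a!/[b!(a-b)!]$, cancel the common $(a-b)!$ factors, and rearrange the remaining quotient as $[b!/(b-x)!]/[a!/(a-x)!] = (b)_x/(a)_x$.

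For the second equality, the plan is to take logarithms and Taylor expand. I would start from
\[
\log\frac{(b)_x}{(a)_x} = \sum_{i=0}^{x-1}\log\frac{b-i}{a-i} = x\log(b/a) + \sum_{i=0}^{x-1}\left[\log(1-i/b) - \log(1-i/a)\right],
\]
observing that the ratio is nonzero only when $x \le b \le a$, so $i/a, i/b \in [0,1)$ and the series $\log(1-t) = -\sum_{k\ge 1}t^k/k$ converges termwise. Collecting by power, the $k=1$ contributions sum to $\sum_{i=0}^{x-1} i(1/a - 1/b) = \tfrac{x(x-1)}{2}(1/a - 1/b)$, which equals $\tfrac{x^2}{2}(1/a - 1/b) + O(x/b)$ using $|1/a - 1/b| \le 1/b$ (since $b \le a$). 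The terms with $k \ge 2$ contribute $\sum_{k\ge 2}\sum_{i=0}^{x-1}\tfrac{i^k}{k}(1/a^k - 1/b^k) = \sum_{k\ge 2} O(x^{k+1}/b^k)$, a tail whose ratio of consecutive terms is $O(x/b) \le 1$ and which is therefore dominated by its leading $k=2$ term of size $O(x^3/b^2)$. Exponentiating gives the claimed expression.

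\textbf{Main obstacle.} There is no serious difficulty---this is a standard estimate of ratios of falling factorials. The only care required is in the bookkeeping of error terms: one must separate $\tfrac{x(x-1)}{2}(1/a - 1/b)$ into its leading $\tfrac{x^2}{2}(1/a - 1/b)$ piece and an $O(x/b)$ remainder, and one must check that all higher-order Taylor terms are uniformly absorbed into a single $O(x^3/b^2)$ error under the (implicit) hypothesis $x \le b$. With these bookkeeping conventions in place, the estimate falls out directly.
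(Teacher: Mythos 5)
Your proposal is correct and follows essentially the same route as the paper: both reduce to $(b)_x/(a)_x = (b/a)^x\prod_{i}(1-i/b)/(1-i/a)$, take logarithms, Taylor-expand, and identify the linear terms as $\tfrac{x(x-1)}{2}(\tfrac1a-\tfrac1b)=\tfrac{x^2}{2}(\tfrac1a-\tfrac1b)+O(x/b)$ with the higher-order terms absorbed into $O(x^3/b^2)$. Your treatment is marginally more careful in summing the full logarithmic series and in spelling out the trivial first equality, but there is no substantive difference.
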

For completeness we include the proof although it is well-known. 
\begin{proof}
\begin{align*}
    \frac{(b)_x}{(a)_x} &= \rbrac{\frac ba}^x \cdot \frac{1 \rbrac{1-\frac 1b}\rbrac{1-\frac 2b} \cdots \rbrac{1-\frac {x-1}b} }{1 \rbrac{1-\frac 1a}\rbrac{1-\frac 2a} \cdots \rbrac{1-\frac {x-1}a}}\\
    &= \rbrac{\frac ba}^x \cdot \exp\cbrac{\sum_{i=0}^{x-1} \sbrac{ \ln\rbrac{1-\frac{i}{b} }- \ln\rbrac{1-\frac{i}{a}} }}\\
    &= \rbrac{\frac ba}^x \cdot \exp\cbrac{\sum_{i=0}^{x-1} \sbrac{ -\frac ib + \frac ia + O\rbrac{\frac{i^2}{a^2} + \frac{i^2}{b^2}} }}\\
    &= \rbrac{\frac ba}^x \cdot \exp\cbrac{ \frac{x(x-1)}{2} \rbrac{\frac1a - \frac1b} + O\rbrac{\frac{x^3}{b^2}}  }\\
    &= \rbrac{\frac ba}^x \exp\cbrac{\frac{x^2}{2}\rbrac{\frac1a - \frac1b} + O\rbrac{\frac{x^3}{b^2} + \frac xb}}.
\end{align*}
\end{proof}
Before we calculate $\E[Y]$ we introduce a function $f_\l(x)$. For any integer $\l \ge 1$, let
\begin{equation}\label{eqn:deff}
    f_\l(x) := \sum_{i=0}^\infty\frac{ x^{i \l}}{(i!)^\l} . 
\end{equation}
 Note that the above power series for $f_\l(x)$ converges for all $x$ and therefore in particular each $f_\l$ is continuous in $x$. We have
 \begin{align}
     \mathbb{E}[Y] &= \sum_{P \in \mc{P}}\mathbb{P}[P\subseteq G_{k, \l}]=\sum_{i=0}^k\left( \binom{k}{i}(k-i)!\right)^\l \frac{\binom{k^2\l-(k-i) \l}{m-(k-i) \l} }{\binom{k^2\l}{m} }\nonumber\\
     &= \sum_{i=0}^k\left( \frac{k!}{i!}\right)^\l  \left(\frac{m}{k^2\ell} \right)^{(k-i) \l} \exp\left\{\frac{(k-i)^2 \l^2}{2}\left(\frac{1}{k^2\ell}-\frac{1}{m}\right)+O\left(\frac{k^3 }{m^2}  + \frac km \right) \right\}\nonumber\\
       &= (k!)^\l p^{k\l} \sum_{i=0}^k\left( \frac{1}{i!}\right)^\l  p^{-i \l} \exp\left\{\frac{ \l}{2}\left(1-\frac{1}{p}\right) + O\rbrac{\frac{i+1}{k}} \right\} \label{eqn:EY1}.\end{align} 
       We split the above sum into two ranges of $i
       $.  Note that for $0 \le i \le \sqrt{k}$ we have \newline $\exp\cbrac{O\rbrac{\frac{i+1}{k}}} = 1+O\rbrac{\frac{1}{\sqrt{k}}}$, while for   $\sqrt{k} \le i \le k$ we have $\exp\cbrac{O\rbrac{\frac{i+1}{k}}}=O(1)$. Thus line \eqref{eqn:EY1} becomes 
    \begin{equation}
        (k!)^\l p^{k\l} \sbrac{ \rbrac{1+ O\rbrac{\frac{1}{\sqrt{k}}}}\exp\left\{\frac{ \l}{2}\left(1-\frac{1}{p}\right)  \right\}\sum_{0 \le i \le \sqrt{k}} \left( \frac{1}{i!}\right)^\l  p^{-i \l}  + O(1)\sum_{\sqrt{k}<i \le k} \left( \frac{1}{i!}\right)^\l  p^{-i \l} } \label{eqn:EY2}.
    \end{equation}
    As $k \rightarrow \infty$ we have $$\sum_{0 \le i \le \sqrt{k}} \left( \frac{1}{i!}\right)^\l  p^{-i \l} \rightarrow f_\l(1/p), $$ and 
    $$\sum_{\sqrt{k} < i \le k} \left( \frac{1}{i!}\right)^\l  p^{-i \l} \le \sum_{i=\sqrt{k}}^{\infty} \left( \frac{1}{i!}\right)^\l  p^{-i \l} =o(1) $$ since the latter is the tail of a convergent series. Thus, returning to our estimate of $\mathbb{E}[Y]$ on line \eqref{eqn:EY2}, we have
      \begin{equation}
         \mathbb{E}[Y] \sim (k!)^\l p^{k\l} \exp\left\{\frac{ \l}{2}\left(1-\frac{1}{p}\right)  \right\}f_\l(1/p). \label{eqn:EY}
      \end{equation}
      
      \subsubsection{Choosing $p, \l$}
      
Now that we know $\E[X], \E[Y]$ we will choose $p, \l$ to make sure that the ratio of $\E[X]$ to $\E[Y]$ is close to $r$. Using lines \eqref{eqn:EX} and \eqref{eqn:EY} we have
 \[
 \frac{\E[X]}{\mathbb{E}[Y]} \sim \frac{1}{f_\l\rbrac{\frac1p}},
 \]
 so we would like to choose $\l$ and $0<p<1$ so that $f_\l(1/p) = 1/r$. We have 
\[
\lim_{x \rightarrow \infty} f_\l(x) = \infty , \qquad f_\l(1) = \sum_{i=0}^k\left( \frac{1}{i!}\right)^\l = 1 + 1 + \frac{1}{2^\l} +  \frac{1}{6^\l} + \frac{1}{24^\l} + \ldots.
\]
Note that we can make $f_\l(1)$ arbitrarily close to 2 by taking $\l$ large. Indeed, we have $f_\l(1) \ge 2$ and 
\begin{align*}
     f_\l(1) = 2+ \sum_{i\ge 2}\left( \frac{1}{i!}\right)^\l &\leq 2+ \sum_{i\geq 2} \left( \frac{1}{2^{i-1} }\right)^\ell =2+ \frac{1}{2^\ell - 1}.
    \end{align*}
Since $r<1/2$, we can choose $\l$ so that $f_{\l}(1) <  1/r$. Then by the intermediate value theorem there is some $x \in (1, \infty)$ such that $f_\l(x) = 1/r$. We choose $p$ to be the value $1/x$, so $0<p<1$  and $f_\l(1/p) = 1/r$. So we view $\l$ and $p$ as constants determined entirely by $r$.

\subsubsection{Second moments of $X, Y$}

In this section we show that $\E[X^2] \sim \E[X]^2$ and $\E[Y^2] \sim \E[Y]^2$. This will complete the proof, since then by the second moment method we have that 
\[
\frac XY \sim \frac{\E[X]}{\E[Y]} \sim \frac{1}{f_\l(1/p)} = r. 
\]
with probability approaching 1 as $k$ goes to infinity. 

To help us estimate $\E[X^2], \E[Y^2]$ we will find the function $h(a, b)$ (defined below) useful. Suppose we have some fixed matching $B$ of $b$ many edges in the graph $K_{a, a}$. Then by inclusion-exclusion the number of perfect matchings that do not have any edges from $B$ is 
\[
h(a, b) := \sum_{w=0}^b (-1)^w \binom{b}{w} (a-w)!.
\]

Note that we always have $h(a, b) \le a!$. We will now observe that, roughly speaking, $h(a,b)\approx \frac{a!}{e}$ whenever $b \approx a \rightarrow \infty$. More formally we have the following

\begin{fact}\label{fact:hest}
    Suppose $a-a^{1/10} \le b \le a$. Then we have 
    \[
    h(a, b) = \rbrac{1 + O(a^{-4/5})} \frac{a!}{e}
    \]
    as $a \rightarrow \infty$
\end{fact}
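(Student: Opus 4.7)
The plan is to divide through by $a!$ and compare the resulting sum to the Taylor series for $1/e$. Writing $b = a - t$ with $0 \le t \le a^{1/10}$, and using $\binom{b}{w}(a-w)!/a! = \frac{1}{w!}\prod_{i=0}^{w-1}\frac{b-i}{a-i}$, we get
\[
\frac{h(a,b)}{a!} = \sum_{w=0}^{b} \frac{(-1)^w}{w!}\, c_w, \qquad c_w := \prod_{i=0}^{w-1}\rbrac{1 - \frac{t}{a-i}}.
\]
The target is $h(a,b)/a! = 1/e + O(a^{-4/5})$, so I would like to show that the $c_w$ factors are so close to $1$ on the range that matters that this sum is essentially $\sum_{w \ge 0}(-1)^w/w! = 1/e$.

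Next I would split the sum at a cutoff $W$ chosen so that $1/W! = O(a^{-4/5})$; for instance $W = \lceil \log a \rceil$ comfortably suffices since then $1/W!$ decays faster than any polynomial in $a$. For the tail $w > W$, since each $c_w \in [0,1]$ the tail is bounded in absolute value by $\sum_{w > W} 1/w! = O(1/W!)$, which is negligible compared to $a^{-4/5}$.

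For the main part $w \le W$, I would estimate $c_w$. Since $t \le a^{1/10}$ and $i \le W = O(\log a)$, each factor satisfies $1 - t/(a-i) = 1 - t/a + O(t \log a / a^2)$, so taking the product gives
\[
c_w = 1 + O\!\rbrac{\frac{tw}{a}}
\]
uniformly for $w \le W$. Plugging this in,
\[
\sum_{w=0}^{W} \frac{(-1)^w}{w!} c_w = \sum_{w=0}^{W} \frac{(-1)^w}{w!} + O\!\rbrac{\frac{t}{a} \sum_{w=0}^{W} \frac{w}{w!}} = \frac{1}{e} + O\!\rbrac{\frac{1}{W!}} + O\!\rbrac{\frac{t}{a}}.
\]
Combining with the tail estimate and using $t/a \le a^{-9/10} = O(a^{-4/5})$ and $1/W! = O(a^{-4/5})$ yields $h(a,b)/a! = 1/e + O(a^{-4/5})$, which is the claim after multiplying through by $a!$ and absorbing the constant $e$.

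The main thing to be careful about is the choice of cutoff $W$: it must be large enough that $1/W!$ beats $a^{-4/5}$, yet small enough that the first-order expansion $c_w = 1 + O(tw/a)$ is valid (i.e., $tW/a = o(1)$). With $t \le a^{1/10}$, any $W$ of order $\log a$ satisfies both requirements simultaneously, so there is no real obstacle; the rest is bookkeeping of error terms.
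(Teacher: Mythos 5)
Your proposal is correct and follows essentially the same route as the paper: normalize by $a!$, truncate the alternating sum at a cutoff, show the ratio factors $\binom{b}{w}(a-w)!/a! \cdot w! = (b)_w/(a)_w$ are $1+o(1)$ on the head, bound the tail by the superpolynomial decay of $1/w!$, and invoke the alternating series estimate to recover $1/e$. The only differences are cosmetic (cutoff $\lceil \log a\rceil$ versus the paper's $a^{1/10}$, and a direct product expansion in place of the paper's Fact \ref{fact:edgeprob}), and your error bookkeeping lands comfortably within the claimed $O(a^{-4/5})$.
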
 
\begin{proof}
We have \begin{align}\label{eqn:h}
     h(a,b)=\sum_{0 \le w \le b} (-1)^w \binom{b}{w} (a-w)!= a! \sum_{0 \le w \le b}  \frac{(-1)^w}{w!}\frac{(b)_w}{(a)_w}.
 \end{align}
Now, for $0 \le w \le a^{1/10}$ we have by Fact \ref{fact:edgeprob} that
\begin{align*}
    \frac{(b)_w}{(a)_w} &= \rbrac{\frac ba}^w \exp\cbrac{ \frac{w^2}{2} \rbrac{\frac1a - \frac1b} + O\rbrac{\frac{w^3}{b^2} + \frac wb}}\\
    &= \rbrac{1 + O\rbrac{a^{-9/10}}}^{O\rbrac{a^{1/10}}} \exp \cbrac{O(a^{-4/5}} = 1 + O(a^{-4/5}).
\end{align*}
Meanwhile for $w \ge a^{1/10}$ we have that the corresponding term in line \eqref{eqn:h} has absolute value
\begin{align*}
    \frac{1}{w!}\frac{(b)_w}{(a)_w} \le \frac{1}{(a^{1/10})!} = \exp\cbrac{ - \Omega\rbrac{a^{1/10} \log a}}
\end{align*}
by Stirling's approximation. Thus, the sum of all such terms in line \eqref{eqn:h} is at most 
\[
b \exp\cbrac{ - \Omega\rbrac{a^{1/10} \log a}} = O(a^{-4/5})
\]
(this bound is quite comfortable). By the Alternating Series Test we have that 
\[
\sum_{0 \le w \le a^{1/10}} \frac{(-1)^w}{w!} = \frac 1e + O\rbrac{\frac{1}{(a^{1/10})!}}  = \frac 1e + O(a^{-4/5}).
\]
 Breaking up the sum for $h(a, b)$ we have \begin{align*}
     h(a, b) &= a! \sbrac{\sum_{0 \le w \le a^{1/10}}  \frac{(-1)^w}{w!}\frac{(b)_w}{(a)_w}  + \sum_{a^{1/10}< w \le b}  \frac{(-1)^w}{w!}\frac{(b)_w}{(a)_w}} \\
     &= a! \sbrac{\rbrac{1 + O(a^{-4/5})} \sum_{0 \le w \le a^{1/10}} \frac{(-1)^w}{w!}  +  O(a^{-4/5}) } \\
     &= \rbrac{1 + O(a^{-4/5})} \frac{a!}{e}.
 \end{align*}
\end{proof}

 We find that
 \begin{align}
     \mathbb{E}[X^2] &= \sum_{D,D' \in \mc{D}}\mathbb{P}[D,D'\subseteq G_{k \l}]=(k!)^\l\sum_{D' \in \mc{D}}\mathbb{P}[D_0,D'\subseteq M]\nonumber\\
     &=(k!)^\l\sum_{b=0}^{k\l}\left[\frac{\binom{k^2\l-(2k \l-b)}{m-(2k \l-b)} }{\binom{k^2\l}{m} }\sum_{\bh \in S_b}\prod_{c=1}^\l\binom{k}{b_c}h(k-b_c, k-b_c) \right].\label{eqn:EX2}
 \end{align}
 where in the inner sum, $S_b$ is the set of $\l$-dimensional vectors $\bh = (b_1, \ldots b_\l)$ whose components are nonnegative integers summing to $b$. 
 
 By \ref{fact:edgeprob}, if $b \le k^{1/10}$ then we have 
\begin{align*}
   \frac{\binom{k^2\l-(2k \l-b)}{m-(2k \l-b)} }{\binom{k^2\l}{m} } &= p^{2k\ell-b}\exp\left\{\frac{(2k\l-b)^2}{2}\left(\frac{1}{k^2\ell}-\frac{1}{m}\right)+O\left(\frac{k^3}{m^2}+\frac{k}{m}\right) \right\}  \\
   & = \rbrac{1 + O(k^{-9/10})} p^{2k\ell-b}\exp\left\{2\ell \left(1-\frac{1}{p}\right) \right\} 
\end{align*}
and by Fact \ref{fact:hest} we have
$ h(k-b_c, k-b_c) =\rbrac{1 + O(k^{-4/5})} \frac{(k-b_c)!}{e}$. Therefore the term corresponding to $b$ in \eqref{eqn:EX2} is 
 \begin{align*}
      \frac{\binom{k^2\l-(2k \l-b)}{m-(2k \l-b)} }{\binom{k^2\l}{m} }\sum_{\bh \in S_b}\prod_{c=1}^\l & \binom{k}{b_c}h(k-b_c, k-b_c)\\
      &= \rbrac{1 + O(k^{-4/5})}p^{2k\ell-b}\exp\left\{2\ell \left(1-\frac{1}{p}\right) \right\} \sum_{\bh\in S_b}\prod_{c=1}^\l\binom{k}{b_c} \frac{(k-b_c)!}{e}  \\
      &= \rbrac{1 + O(k^{-4/5})}(k!)^{\l} p^{2k\ell-b}\exp\left\{2\ell \left(1-\frac{1}{p}\right) - \l \right\} \sum_{\bh\in S_b} \prod_{c=1}^\l \frac{1}{b_c!} \\ 
      &=\rbrac{1 + O(k^{-4/5})}(k!)^{\l} p^{2k\ell-b}\exp\left\{\ell \left(1-\frac{2}{p}\right)  \right\}  \frac{\l^b}{b!}
 \end{align*}
 where on the last line we used the multinomial formula. Meanwhile if $b \ge k^{1/10}$ then the term corresponding to $b$ in \eqref{eqn:EX2} is 
  \begin{align*}
      \frac{\binom{k^2\l-(2k \l-b)}{m-(2k \l-b)} }{\binom{k^2\l}{m} }\sum_{\bh \in S_b}\prod_{c=1}^\l & \binom{k}{b_c}h(k-b_c, k-b_c) \le   \sum_{\bh\in S_b}\prod_{c=1}^\l\binom{k}{b_c} (k-b_c)!\\
      & = (k!)^\l  \frac{\l^b}{b!} = (k!)^\l \cdot \exp\cbrac{-\Omega\rbrac{k^{1/10} \log k}}
 \end{align*}
 and so the sum of all terms in \eqref{eqn:EX2} with $b \ge k^{1/10}$ is at most 
 \[
 k\l \cdot (k!)^\l \cdot \exp\cbrac{-\Omega\rbrac{k^{1/10} \log k}} = (k!)^\l \cdot O\rbrac{k^{-4/5}}.
 \]
 Therefore 
  \begin{align*}
     \mathbb{E}[X^2] &=(k!)^\l\sum_{b=0}^{k\l}\left[\frac{\binom{k^2\l-(2k \l-b)}{m-(2k \l-b)} }{\binom{k^2\l}{m} }\sum_{\bh \in S_b}\prod_{c=1}^\l\binom{k}{b_c}h(k-b_c, k-b_c) \right]\\
     &= (k!)^\l \sbrac{\sum_{0 \le b \le k^{1/10} } \rbrac{1 + O(k^{-4/5})}(k!)^{\l} p^{2k\ell-b}\exp\left\{\ell \left(1-\frac{2}{p}\right)  \right\}  \frac{\l^b}{b!} + (k!)^\l \cdot   O\rbrac{k^{-4/5}}}\\
     & = \rbrac{1 + O(k^{-4/5})} (k!)^{2\l}p^{2k\l}\exp\left\{\ell \left(1-\frac{2}{p}\right)  \right\} \sum_{0 \le b \le k^{1/10} }  p^{-b}  \frac{\l^b}{b!}\\
     & = \rbrac{1 + O(k^{-4/5})} (k!)^{2\l}p^{2k\l}\exp\left\{\ell \left(1-\frac{2}{p}\right)  \right\}\cdot \rbrac{\exp\left\{\frac{\l}{p}  \right\} + O(k^{-4/5}) } \\
      & = \rbrac{1 + O(k^{-4/5})} (k!)^{2\l}p^{2k\l}\exp\left\{\ell \left(1-\frac{1}{p}\right)  \right\} \\
      & \sim \mathbb{E}[X]^2
 \end{align*}

\vspace{1cm}
 For $\E[Y^2]$ we find an exact expression to be cumbersome, but the following upper bound will suffice: 
 \begin{align}
     \mathbb{E}[Y^2] &\le  \sum_{\substack{0 \le i, j \le k\\ 0 \le b \le k \l \\ \bh \in S_b}}  \frac{\binom{k^2\ell-(2k\ell-(i+j)\ell-b)}{m-(2k\ell-(i+j)\ell-b)} }{\binom{k^2\ell}{m} } \left(\frac{k!}{i!}\right)^\ell \prod_{c=1}^\ell \binom{k-i}{b_c}\binom{k-b_c}{j} h(k-j-b_c, k-i-b_c-2j) \label{eqn:EYsquared}
 \end{align}
The term corresponding to a tuple $(i, j, b, \bh)$ above is an upper bound on the contribution to $\mathbb{E}[Y^2]$ due to pairs of permutations $(P, P')$ such that $P$ fixes $i$ vertices per part, $P'$ fixes $j$ vertices per part, and $P$ and $P'$ share a total of $b$ edges where $b_c$ of the shared edges are between $V_c$ and part $V_{c+1}$. The first factor is the edge probability, and the next factor is the number of choices for $P$. The next factor is an upper bound on the number of choices for $P'$. Indeed, we choose the edges of $P'$ from $V_c$ to  $V_{c+1}$ by first choosing $b_c$ edges of $P$ to be shared, then we choose $j$ vertices in $V_c$ to be fixed by $P'$, and finally we choose a matching between the remaining vertices (the vertices of $V_c \cup V_{c+1}$ that are not fixed by $P'$ and are not endpoints of the $b_c$ shared edges already chosen). This matching must avoid any edges of $P$, and the vertices to be matched induce at least $k-i-b_c-2j$ edges of $P$, explaining the last factor above. 

We will now estimate the significant terms in \eqref{eqn:EYsquared}. Assume $i, j, b \le k^{1/10}$. Then by Fact
\ref{fact:edgeprob} \begin{align*}
     \frac{\binom{k^2\ell-(2k\ell-(i+j)\ell-b)}{m-(2k\ell-(i+j)\ell-b)} }{\binom{k^2\ell}{m} } &= p^{2k\ell-(i+j)\ell-b}\exp\left\{\frac{(2k\ell-(i+j)\ell-b)^2}{2k^2\l}\rbrac{1-\frac{1}{p}} +O\rbrac{\frac{1}{k}}\right\} \\
     &= p^{2k\ell-(i+j)\ell-b}\exp\left\{2\ell\left(1-\frac{1}{p}\right) +O\rbrac{k^{-9/10}}\right\}.
 \end{align*}
 Next we estimate 
 \begin{align*}
h(k-j-b_c, k-i-b_c-2j) = \rbrac{1+O\rbrac{k^{-4/5}}} \frac{(k-j-b_c)!}{e}
 \end{align*}
 by Fact \ref{fact:hest}. So the product in \eqref{eqn:EYsquared} is
 \begin{align}
     \prod_{c=1}^\ell \binom{k-i}{b_c}\binom{k-b_c}{j} h(k-j-b_c, k-i-b_c-2j) & = \rbrac{1+O\rbrac{k^{-4/5}}} \prod_{c=1}^\ell \frac{(k-i)_{b_c}}{b_c!}\frac{(k-b_c)_j}{j!} \frac{(k-j-b_c)!}{e} \nonumber \\
     & \le  \rbrac{1+O\rbrac{k^{-4/5}}}\rbrac{\frac{k!}{ej!}}^\l \prod_{c=1}^\ell \frac{1}{b_c!}.\label{eqn:prodest1}
 \end{align}

 The sum of terms in \eqref{eqn:EYsquared} corresponding to small $i, j, b$ is at most 
  \begin{align}
     &\rbrac{1+O\rbrac{k^{-4/5}}} \sum_{\substack{0 \le i, j, b \le k^{1/10} \\ \bh \in S_b}} p^{2k\ell-(i+j)\ell-b}\exp\left\{2\ell\left(1-\frac{1}{p}\right) \right\}. \left(\frac{k!}{i!}\right)^\ell \rbrac{\frac{k!}{ej!}}^\l \prod_{c=1}^\ell \frac{1}{b_c!}\nonumber \\
     & =\rbrac{1+O\rbrac{k^{-4/5}}} \exp\left\{\ell\left(1-\frac{2}{p}\right) \right\}\sum_{0 \le i, j, b \le k^{1/10}} p^{2k\ell-(i+j)\ell-b} \left(\frac{k!}{i!}\right)^\ell \rbrac{\frac{k!}{j!}}^\l \frac{\l^b}{b!}\nonumber \\
     & \le \rbrac{1+O\rbrac{k^{-4/5}}} (k!)^{2\l} p^{2k\l}\exp\left\{\ell\left(1-\frac{1}{p}\right) \right\}\sum_{0 \le i, j , b} \frac{p^{-i\ell} }{(i!)^\l} \cdot  \frac{p^{-j\ell} }{(j!)^\l} \cdot  \frac{(\l/p)^b}{b!} \nonumber\\
     &=  \rbrac{1+O\rbrac{k^{-4/5}}} (k!)^{2\l} p^{2k\l}\exp\left\{\ell\left(1-\frac{1}{p}\right) \right\} f_\l\rbrac{\frac1p}^2 \nonumber\\
     &\sim \E[Y]^2 \nonumber
 \end{align}
 where on the second-to-last line we have used 
 \[
 \sum_{0 \le i} \frac{p^{-i\ell} }{(i!)^\l} = f_\l\rbrac{\frac1p}, \qquad \qquad \sum_{0 \le b }  \frac{(\l/p)^b}{b!} =  \exp\cbrac{\frac {\l}{p}}.
 \]
 
 It remains to show that the sum of all other terms (i.e. terms where $i, j,$ or $b$ is at least $k^{1/10}$) is negligible compared to $\E[Y]^2$, which is of order $(k!)^{2\l} p^{2k\l}$. Note that by Fact
\ref{fact:edgeprob} \begin{align*}
     \frac{\binom{k^2\ell-(2k\ell-(i+j)\ell-b)}{m-(2k\ell-(i+j)\ell-b)} }{\binom{k^2\ell}{m} } &= p^{2k\ell-(i+j)\ell-b}\exp\left\{\frac{(2k\ell-(i+j)\ell-b)^2}{2k^2\l}\rbrac{1-\frac{1}{p}} +O\rbrac{\frac{1}{k}}\right\} \\
     &= O\rbrac{p^{2k\ell-(i+j)\ell-b}}.
 \end{align*}
 Thus, the sum (over $\bh$) of terms corresponding to a fixed triple $(i, j, b)$ in line \eqref{eqn:EYsquared} big-O of  
 \begin{align*}
      & p^{2k\ell-(i+j)\ell-b}  \sum_{\bh \in S_b} \left(\frac{k!}{i!}\right)^\ell \prod_{c=1}^\ell \binom{k-i}{b_c}\binom{k-b_c}{j} (k-j-b_c)! \nonumber\\
      & \le p^{2k\ell-(i+j)\ell-b} \left(\frac{k!}{i!}\right)^\ell \left(\frac{k!}{j!}\right)^\ell \sum_{\bh \in S_b}  \prod_{c=1}^\ell \frac{1}{b_c!} \nonumber\\
      & =\rbrac{(k!)^{2\l} p^{2k\l} } \cdot \rbrac{ \frac{\l^b}{p^{(i+j)\ell+b}(i!)^\ell(j!)^\l b!} }.
 \end{align*}
It is easy to see that if $i, j$ or $b$ is at least $k^{1/10}$ then the second factor above is $\exp\cbrac{-\Omega\rbrac{k^{1/10} \log k}}$. Since the number of triples $(i, j, b)$ is polynomial in $k$, the sum of all such terms (i.e. where $i, j$ or $b$ is at least $k^{1/10}$) is $o\rbrac{(k!)^{2\l} p^{2k\l} }$ which is a negligible contribution to $\E[Y^2]$. Therefore we have $\E[Y^2]\sim \E[Y]^2.$

\section{Remarks and Open Problems}

The reader should note that we did not use a ``binomial" random construction (e.g. keep each edge of $D_{k, \ell}$ with probability $p$ independently) because such a model lacks the concentration we need here. Indeed, for example Janson (\cite{J94}) showed that the number of perfect matchings in $G(n, p)$ is not concentrated even when it is quite large, while the number of perfect matchings of $G(n, m)$ is concentrated. We tried to use a binomial random construction and found that the second moments were too large, which in light of Janson's result makes sense (for example derangements in our graph are just a union of several perfect matchings on bipartite graphs). 

There are still interesting open problems in \cite{BDHHL}. In particular it is still open whether $S$, the set of possible ratios $(d/p)_G$, is equal to $\mathbb{Q} \cap [0, 1/2]$. Here we would like to pose another open problem that is mostly unrelated to our result. In particular, we ask about stability for digraphs whose ratio $(d/p)_G$ is close to $1/2$: do such digraphs have to resemble the blow-up of a directed graph?

\bibliographystyle{abbrv}
\bibliography{refs}

\end{document}